\def\({\left(}
\def\){\right)}
\def\Nx{\nabla_x}
\def\Cal{\mathcal}
\def\eb{\varepsilon}
\def\tilde{\widetilde}
\newcommand{\be}{\begin{equation} }
\newcommand{\ee}{\end{equation} }
\def \and{\qquad\text{and}\qquad}
\def\Bbb{\mathbb}
\def\Dt{\partial_t}
\def\Dx{\Delta_x}
\def\({\left(}
\def\){\right)}
\def\Nx{\nabla_x}
\def\eb{\varepsilon}
\def\Cal{\mathcal}
\def\eb{\varepsilon}
\def\R {\mathbb R}
\def\<{\left<}
\def\>{\right>}
\def \and{\qquad\text{and}\qquad}
\def\Bbb{\mathbb}
\def\Dt{\partial_t}
\def\Dx{\Delta_x}
\def\R {\mathbb R}
\newtheorem{proposition}{Proposition}[section]
\newtheorem{theorem}[proposition]{Theorem}
\newtheorem{corollary}[proposition]{Corollary}
\theoremstyle{definition}
\newtheorem{definition}[proposition]{Definition}
\newtheorem{remark}[proposition]{Remark}
\newtheorem{example}[proposition]{Example}
\numberwithin{equation}{section}
\def\be{\begin{equation}}
\def\ee{\end{equation}}
\def\bp{\begin{proof}}
\def\ep{\end{proof}}
\def \no#1#2#3 {{\bf #1} (#3), #2.}
\def \eds#1#2#3 {#1, #2, #3.}
\title[Attractors for multi-dimensional time]
{Attractors for semigroups with multi-dimensional time and PDEs in unbounded domains }
\author[ A. Kostianko,  and  S. Zelik]
{Anna Kostianko${}^{1,2}$, and Sergey Zelik${}^{1,3,4}$}
\address{${}^1$  Lanzhou University, Lanzhou\\ 730000,
P.R. China.}
\address{${}^2$  Imperial College, London SW7 2AZ, United Kingdom.}
\address{${}^3$ University of Surrey, Guildford, GU2 7XH, United Kingdom.}
\address{${}^4$ Keldysh Institute of Applied Mathematics, Moscow, Russia}
\email{a.kostianko@imperial.ac.uk}
\email{s.zelik@surrey.ac.uk}
\begin{document}

\begin{abstract} We develop the attractors theory for the semigroups with multidimensional time belonging to some closed cone in an Euclidean space and apply the obtained general results to partial differential equations (PDEs) in unbounded domains. The main attention is payed to elliptic boundary problems in general unbounded domains. In contrast to the previous works in this direction our theory does not require the underlying domain to be cylindrical  or cone-like or to be shift semi-invariant with respect to some direction. In particular, the theory is applicable to the exterior domains. 
\end{abstract}

\subjclass[2010]{35B41, 35J57}
\keywords{Multi-parametric semigroups, attractors, unbounded domains, elliptic PDEs, exterior domains}
\thanks{This work is partially supported by the grant 19-71-30004 of RSF (Russia),
  and the Leverhulme grant No. RPG-2021-072 (United Kingdom)}
\maketitle
\tableofcontents
\section{Introduction}
It is well-known, see \cite{KH95,Shi01} and references therein, that even relatively simple systems of ODEs are usually cannot be solved in elementary functions and may generate  non-trivial dynamics, so the qualitative theory is required in order to understand the longtime behavior of their solutions. The central role in this study is played by the concept of a {\it dynamical system} (DS). To associate a DS with the considered ODEs, we need to find the space $\Phi$ where the initial data lives and where the initial value problem is globally well-posed forward in time $t$ (which is referred as a phase space of the considered problem) and introduce the solution operator $S(t):\Phi\to\Phi$, which is defined by $S(t)u_0=u(t)$ where $u_0\in\Phi$ is the initial state (at $t=0$) of our system and $u(t)$ is the solution at time moment $t$. These operators, which describe the evolution of the considered system, are often referred as a DS associated with the initial system of ODEs. In the case, where this system is autonomous, these operators generate a semigroup in the phase space $\Phi$:
$$
S(0)=\operatorname{Id},\ \  S(t_1+t_2)=S(t_1)\circ S(t_2), \ \ t_1,t_2\in\R_+.
$$
The analogous approach works also for the case where the DS is generated by an evolutionary partial differential equation (PDE) with a natural difference that the phase space $\Phi$ becomes an infinite-dimensional space (e. g, the space of square integrable functions). An important particular case where the dynamical systems approach is especially effective is the case of {\it dissipative} PDEs, which includes many important classes of physically relevant equations such as reaction-diffusion and Navier-Stokes systems, pattern formation equations, damped wave equations, etc. Roughly speaking, such systems lose and gain energy by interacting with the external world, the energy income  usually takes place  though lower Fourier modes and the energy dissipation becomes dominating in higher Fourier modes. Then the energy flow from lower to higher modes caused by the nonlinearity is responsible for the formation of complex space-time structures, the so-called {\it dissipative structures} in the terminology of Prigogine, see \cite{Pri77}. A mathematically rigorous definition of a dissipative system is usually related with the so-called dissipative estimate which, in the case where $\Phi$ is a subspace of a normed space, can be written down as follows:
\begin{equation}\label{0.dis}
\|S(t)u_0\|_{\Phi}\le Q(\|u_0\|_{\Phi})e^{-\alpha t}+C_*,\ \ t\ge0,\ \ u_0\in\Phi,
\end{equation}
where the positive constant $\alpha$ and a monotone increasing function $Q$ are independent of $u_0$ and $t$. Note that some kind of asymptotic compactness is also often included  into the definition, see \cite{BV92,Hal88,tem} and references therein. The norm $\|u_0\|_\Phi$ plays the role of energy here and the dissipative estimate \eqref{0.dis} claims that the dissipation becomes dominating on higher energy levels and energy income is essential only for relatively low energy levels.
\par
The concept of an {\it attractor} plays a central role in the modern theory of dissipative systems. By the definition, an attractor $\Cal A$ is a compact strictly invariant set in the phase space $\Phi$ which attracts the images of all "bounded" sets in the phase space as time tends to infinity. In relatively simple cases,  "bounded" sets are determined using the norm in the phase space $\Phi$, but in more general situation, they are defined by fixing a {\it bornology} $\Bbb B$ on $\Phi$, which consists of subsets of $\Phi$, which are referred as "bounded" sets, see \cite{Z23} and references therein. Thus, on the one hand, the attractor consists of all trajectories $u(t)$, $t\in\R$, of the considered DS which are defined and bounded and, therefore, somehow captures all of the non-trivial longtime dynamics. On the other hand, it is usually much smaller than the initial phase spaces (recall that compact sets in infinite-dimensional normed spaces are nowhere dense and are almost finite-dimensional). The attractors theory is most effective in the case of dissipative PDEs in bounded domains, where in many cases the fractal dimension of the attractor is finite, so keeping in mind the Man\'e projection theorem, this theory allows us to build up the finite-dimensional reduction of the initial PDE to the reduced system of ODEs on the attractor, the so-called inertial form of the initial PDE, see \cite{BV92,ChVi02,Rob,Rob11,tem} and references therein.
\par
The situation becomes much more complicated when a PDE in an unbounded spatial domain is considered. In this case, together with the unbounded temporal direction, we have also unbounded spatial directions, so together with complicated dynamics in time (temporal chaos), we also have complicated spatial behavior of solutions (the so-called spatial chaos). Also, the interaction between spatially and temporary chaotic modes may generate the so-called spatio-temporal chaos, see \cite{MZ08,Z04} and references therein. Although the attractors theory for dissipative PDEs in unbounded domains is highly developed nowadays, see \cite{MZ08} and references therein, the one-dimensional time does not look as an appropriate tool to handle e.g. spatial or spatio-temporal chaos, so it was suggested in \cite{Z04} to interpret both $t$ and $x$ directions as a multi-dimensional "time" and consider a multi-parametrical semigroup $S(t,h):=S(t)\circ \Cal T(h)$, $t\in\R_+$, $h\in\R^d$, which acts on the phase space $\Phi$ and which consists of the standard temporal evolution $S(t)$ and the group of spatial shifts $\Cal T(h)$. This semigroup is then interpreted as a DS  with multi-dimensional time and its dynamical quantities (such as topological entropy, directional entropies, etc.) are studied in relation with spatial and spatio-temporal chaos, see \cite{Z04}.
\par
However, to the best of our knowledge, the attractors theory for DS with multi-dimensional time was not considered before and the ultimate goal of this paper is to develop such a theory. In section \ref{s1}, we start with a semigroup $S(h):\Phi\to\Phi$ acting on a metric space $\Phi$, where the "time" $h$ belongs to some closed cone $\Cal C\subset\R^m$ with non-empty interior. In addition, we introduce an admissible set of times $\Sigma\subset\Cal C$ (referred also as a time arrow) such that $\partial\Cal C\subset \partial\Sigma$ together with some bornology $\Bbb B$ for which no extra assumptions are posed and extend the main concepts of the attractor theory to this case. For instance, the set $K$ is $(\Bbb B,\Sigma)$-attracting set if for every $B\in\Bbb B$ and every $\Cal O(K)$ there exists $D=D(B,\Cal O)$ such that
$$
S(h)B\subset\Cal O(K),\ \ d(h,\partial\Sigma)\ge D.
$$ 
 Thus, "big" time $h$ is determined by the condition $d(h,\partial\Sigma)\gg1$. The most natural choice of $\Sigma$ is $\Sigma=\Cal C$, but keeping in mind our main application to elliptic boundary value problems in unbounded non-cylindrical and non-conical domains, it is natural to consider not all times  $h\in\Cal C$ in the attraction property, but only times $h\in\Sigma$. Since we do not have further restrictions on $\Sigma$, the structure of multi-dimensional time arrow may be curvilinear, have holes, etc. The condition that $\partial\Cal C\subset\partial\Sigma$ is included in order to have the description of the attractor in terms of bounded complete trajectories defined for all $h\in\R^m$.
\par
With the changes described above, the attractors theory for this multi-dimensional time case looks very similar to the case of standard time. For instance, the $(\Bbb B,\Sigma)$-attractor is defined as a compact strictly invariant $(\Bbb B,\Sigma)$-attracting set of the phase space and the main theorem tells us that such an attractor exists if the semigroup $S(h)$ possesses a compact $(\Bbb B,\Sigma)$-attracting set and $S(h):\Phi\to\Phi$ are continuous for every fixed $h\in\Cal C$. We also give (in section \ref{s1}) the sufficient conditions for the attractor to be represented through the set of all complete bounded trajectories $u(s)$ of $S(h)$ which are defined for all $s\in\R^m$.
\par
In section \ref{s2}, we apply the abstract results obtained in section \ref{s1} to PDEs in unbounded domains. Although we consider a parabolic PDE as one of our examples, our main source of motivation is related with elliptic PDEs in general unbounded domains. The applications of the attractors theory to such problems are usually based on the notion of the so-called {\it trajectory attractor} and have been initiated in \cite{Ba94,VZ96} and \cite{Z-VIN,Z98} for the cases of cylindrical domains and cone like domains respectively. In both cases, the considered domain $\Omega$ was semi-invariant with respect to shifts in some spatial direction $\vec l$ and exactly this direction is interpreted as the direction of a "temporal" evolution. Then, if we introduce the space $\Cal K_+$ of all solutions of the considered problem for all possible boundary data, then the semigroup of "temporal" translations $T(t):\Cal K_+\to\Cal K_+$, $(T(t)u)(x):=u(x+t\vec l)$, $t\ge0$, will act on this space. After that the DS $(T(t),\Cal K_+)$ is interpreted as a trajectory dynamical system associated with the considered elliptic boundary value problem and the corresponding attractors (which are called trajectory attractors) are studied, see also \cite{ChVi95,ChVi02} for a general theory of trajectory attractors.
\par
The principal drawback of this scheme is that it requires the existence of a direction with respect to which the domain $\Omega$ is semi-invariant. This assumption looked unavoidable, and to the best of our knowledge, the attractors theory has been never applied before to elliptic problems in the domain without such a direction, for instance, for equations in the exterior domains. Surprisingly, the theory of attractors developed in this paper works perfectly for elliptic boundary problems in general unbounded domains without any semi-invariant directions, in particular, for exterior domains. This is achieved mainly due to introducing the time arrow $\Sigma$ which does not require any invariance properties, see Examples \ref{Ex2.ell-cone}, \ref{Ex2.ext} and \ref{Ex2.disc} below for more details.

\section{Main results}\label{s1}
In this section, we build up the attractor theory for multi-parametric semigroups $S(h):\Phi\to\Phi$, where $\Phi$ is a  metric space and the "time" variable $h$ is multi-dimensional. Namely, we assume that we are given a closed cone $\Cal C\subset\R^m$ with non-empty interior. We also assume that the operators $S(h)$ are well-defined  for any $h\in\Cal C$ and satisfy the semigroup identity:
\begin{equation}\label{1.sem}
S(0)=\operatorname{Id},\ \ S(h_1+h_2)=S(h_1)\circ S(h_2),\ \ h_1,h_2\in\Cal C.
\end{equation}
We now need to fix the proper extensions of the standard concepts of the attractors theory to the considered multi-dimensional case. First of all, we fix a bornology $\Bbb B$ on $\Phi$, i.e. the collection of subsets of $\Phi$, which will be attracted by the desired attractor and which will be further referred as "bounded" sets of $\Phi$, and some non-empty set $\Sigma\subset\Cal C$ such that $\partial\Cal C\subset\partial\Sigma$, which will be referred as an admissible set or time arrow,  and then we define what are absorbing and attracting sets for our situation.

\begin{definition}\label{Def1.aa} Let the semigroup $S(h)$ satisfy the above assumptions. Then the set $\Cal B\subset \Phi$ is an absorbing set ($(\Bbb B,\Sigma)$-absorbing set) if, for every bounded set $B\in\Bbb B$, there exists $D=D(B)$ such that
\begin{equation}\label{1.abs}
S(h)B\subset\Cal B,\ \ \text{for all}\ \ h\in\Cal C\ \ \text{satisfying }\ d(h,\partial\Sigma)\ge D,
\end{equation}
where $d(h,V)$ stands for the standard distance in $\R^n$.
\par
Analogously, the set $K$ is an attracting set for this semigroup if for every bounded set $B\in\Bbb B$ and every neighbourhood $\Cal O(K)$, there exists $D=D(B,\Cal O)$ such that
\begin{equation}\label{1.atr}
S(h)B\subset\Cal O(K),\ \ \text{for all}\ \ h\in\Cal C\ \ \text{satisfying }\ d(h,\partial\Sigma)\ge D,
\end{equation}
In particular, the bornology $\Bbb B$  on $\Phi$ can be determined by the fixed metric in $\Phi$ (which leads to the standard global attractors), or by the metric of a different metric space $\Psi$ (which leads to the so-called $(\Psi,\Phi)$-attractors in the terminology of Babin and Vishik, see \cite{BV92}), but in general the bornology $\Bbb B$ may be an arbitrary collection of subsets of $\Phi$. The time arrow $\Sigma$  typically coincides with the initial cone   $\Cal C$, but in a more general situation may be an arbitrary subset of $\Cal C$ satisfying $\partial \Cal C\subset\partial\Sigma$ and the following condition: for every $D>0$, there exists a point $h_D\in\Sigma$ such that
\begin{equation}\label{2.sigma}
d(h_D,\partial\Sigma)\ge D.
\end{equation}
\end{definition}
The extension of the concept of a global attractor now reads.
\begin{definition}\label{Def1.attr} Let $\Phi$ be a metric space, $\Bbb B$ be a bornology on $\Phi$ and the semigroup $S(h)$ satisfy the above assumptions. Then, a set $\Cal A\subset\Phi$ is a $(\Bbb B,\Sigma)$-attractor for the semigroup $S(h)$ if the following conditions are satisfied:
\par
1. The set $\Cal A$ is  compact set in $\Phi$;
\par
2. The set $\Cal A$ is strictly invariant, i.e. $S(h)\Cal A=\Cal A$ for all $h\in\Cal C$;
\par
3. The set $\Cal A$ is $(\Bbb B,\Sigma)$-attracting for the semigroup $S(t)$.
\end{definition}
In order to extend the representation formula for the attractor (i.e. the description of the attractor in terms of all bounded complete solutions), we need one more definition.
\begin{definition}Let the semigroup $S(t)$ satisfy the above assumptions. Then a function $u:\R^m\to\Phi$ is a complete bounded trajectory of the semigroup $S(h)$ if
\par
1. $S(h)u(s)=u(h+s)$ for all $h\in\Cal C$ and $s\in\R^m$;
\par
2. There exists a bounded set $B_u\in\Bbb B$ such that $u(s)\in B_u$ for all $s\in\R^m$.
\par
The set of all complete bounded trajectories of $S(h)$ is denoted by $\Cal K$.
\end{definition} 
\begin{remark}\label{Rem1.str} Although the extension to the case of multi-dimensional time constructed above looks straightforward, the unusual form of the attraction property requires some explanations. First of all, as it is not difficult to see that this property is consistent with the standard 1D time. Indeed, we will have exactly the standard definitions if we take $m=1$ and fix $\Sigma=\Cal C$ as a non-negative semi-axis. On the other hand, it a priori looks more natural to require that $d(h,0)\ge D$. Actually, it is probably possible to develop the analogue of the attractors theory with such type of attraction, but it will be not so elegant, in particular, we will have problems with strict invariance of the attractor as well as with the structure of the set of complete trajectories (e.g., it may contain trajectories which are defined not in the whole $\R^m$), etc. Thus, the chosen way to extend the attraction property is the optimal one (at least from our point of view) and we will not consider the alternative ways in this paper.
\par
We also mention that it is possible to extend the theory (in the spirit of \cite{Z23}) to the case of general Hausdorff topological spaces $\Phi$ with general bornologies on them, non-continuous semigroup, etc, but in order to avoid technicalities, we will not consider such generalizations as well.
\end{remark}
We are now ready to state and prove our main result which guarantees the existence of such an attractor. 
\begin{theorem}\label{Th1.main} Let the multi-parametric semigroup $S(h):\Phi\to\Phi$, $h\in\Cal C$, acting on a  metric space $\Phi$ with a bornology $\Bbb B$ and a time arrow $\Sigma$, satisfy the following assumptions:
\par
1. Operators $S(h)$ are continuous for all $h\in\Cal C$;
\par
2. The semigroup $S(h)$ possesses a compact $(\Bbb B,\Sigma)$-attracting set $K$.
\par
Then there exists a $(\Bbb B,\Sigma)$-attractor $\Cal A\subset K$ of this semigroup;  
\end{theorem}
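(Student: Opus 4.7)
The plan is to construct the attractor as the multi-dimensional $\omega$-limit set of $K$,
\[
\Cal A:=\bigcap_{D>0}\overline{\bigcup_{h\in\Cal C,\,d(h,\partial\Sigma)\ge D}S(h)K},
\]
and to verify in turn each of the three conditions in Definition~\ref{Def1.attr}. Closedness is automatic. Since $K$ is a compact attracting set (so, in particular, attracts itself), the union $\bigcup_{d(h,\partial\Sigma)\ge D}S(h)K$ is contained in any prescribed open neighbourhood of $K$ once $D$ is large, which forces $\Cal A\subset K$ and hence compactness. For non-emptiness, \eqref{2.sigma} produces $h_n\in\Sigma$ with $d(h_n,\partial\Sigma)\to\infty$; for any fixed $x_0\in K$ the orbit $S(h_n)x_0$ approaches the compact set $K$, and any accumulation point lies in $\Cal A$.

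The geometric input I will use throughout is an elementary lemma about the closed convex cone $\Cal C$: whenever $g,h\in\Cal C$ satisfy $d(g,\partial\Cal C)>|h|$ then $g-h\in\Cal C$, for otherwise the segment from $g$ to $g-h$ would pierce $\partial\Cal C$ within distance $|h|$ of $g$. Combined with $\partial\Cal C\subset\partial\Sigma$, which yields the inequality $d(\,\cdot\,,\partial\Cal C)\ge d(\,\cdot\,,\partial\Sigma)$, this shows that for any fixed $p\in\Cal C$ every sequence $h_n\in\Cal C$ with $d(h_n,\partial\Sigma)\to\infty$ decomposes as $h_n=p+(h_n-p)$ with $h_n-p\in\Cal C$ for all $n$ large and $d(h_n-p,\partial\Sigma)\to\infty$. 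Strict invariance of $\Cal A$ is then routine. The inclusion $S(h)\Cal A\subset\Cal A$ follows from $d(h+g,\partial\Sigma)\ge d(g,\partial\Sigma)-|h|$ together with the continuity of $S(h)$. For $\Cal A\subset S(h)\Cal A$, the definition of $\Cal A$ furnishes $x_n\in K$ and $h_n\in\Cal C$ with $d(h_n,\partial\Sigma)\to\infty$ and $S(h_n)x_n\to y$; setting $g_n:=h_n-h\in\Cal C$, a convergent subsequence $S(g_n)x_n\to z\in K$ is extracted via attraction and compactness of $K$, $z\in\Cal A$ by construction, and $S(h)z=y$ by continuity.

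The main obstacle is verifying that $\Cal A$ is $(\Bbb B,\Sigma)$-attracting, which I plan to prove by contradiction. Suppose $\Cal A$ fails to attract some $B\in\Bbb B$: there exist a neighbourhood $\Cal O(\Cal A)$, $x_n\in B$ and $h_n\in\Cal C$ with $d(h_n,\partial\Sigma)\to\infty$ but $S(h_n)x_n\notin\Cal O(\Cal A)$. The attracting property of $K$, combined with its compactness, yields along a subsequence $S(h_n)x_n\to p\in K$, and the task reduces to showing $p\in\Cal A$. Fix $D>0$ arbitrary and, invoking \eqref{2.sigma}, choose $p_*\in\Cal C$ with $d(p_*,\partial\Sigma)\ge D$. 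The cone lemma gives $h_n-p_*\in\Cal C$ for all $n$ large with $d(h_n-p_*,\partial\Sigma)\to\infty$, and a further subsequence extraction, using the attraction of $K$ to $B$ together with the compactness of $K$, produces $S(h_n-p_*)x_n\to z^*\in K$. The continuity of the fixed operator $S(p_*)$ then yields
\[
p=\lim_n S(h_n)x_n=\lim_n S(p_*)\bigl(S(h_n-p_*)x_n\bigr)=S(p_*)z^*\in S(p_*)K,
\]
whence $p\in\overline{\bigcup_{d(h,\partial\Sigma)\ge D}S(h)K}$. Since $D>0$ was arbitrary, $p\in\Cal A$, contradicting $p\notin\Cal O(\Cal A)$.
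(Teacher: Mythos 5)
There is a genuine gap, and it lies in the very first line: you set $\Cal A:=\omega(K)$, the $\omega$-limit set of the attracting set itself. This is precisely the formula that Remark~\ref{Rem1.cont-abs} warns about: $\Cal A=\omega(K)$ is valid when $K\in\Bbb B$, but ``may fail in a general case.'' Theorem~\ref{Th1.main} does \emph{not} assume $K\in\Bbb B$ (that is the \emph{extra} hypothesis of Corollary~\ref{Cor1.main}); the bornology is an arbitrary collection of subsets and need not contain $K$, nor its points, nor any of its subsets. Yet at three key places you use that $K$ attracts itself: (i) non-emptiness (``for any fixed $x_0\in K$ the orbit $S(h_n)x_0$ approaches $K$''), (ii) the inclusion $\Cal A\subset K$ and hence compactness (``$K$ attracts itself, so $\bigcup_{d(h,\partial\Sigma)\ge D}S(h)K$ is contained in any neighbourhood of $K$''), and (iii) the surjectivity half of strict invariance, where you extract $S(g_n)x_n\to z\in K$ from $x_n\in K$ ``via attraction.'' The attraction hypothesis only controls $S(h)B$ for $B\in\Bbb B$; it says nothing about $S(h)K$ when $K\notin\Bbb B$, so $\bigcup_{d(h,\partial\Sigma)\ge D}S(h)K$ may fail to be precompact and $\omega(K)$ may fail to be compact, to lie in $K$, or to be invariant.

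The remedy is the paper's route: define $\omega(B)$ for each $B\in\Bbb B$ separately and set $\Cal A:=\big[\bigcup_{B\in\Bbb B}\omega(B)\big]_\Phi$. Then every compactness extraction is applied to sequences $S(h_n)u_n$ with $u_n\in B\in\Bbb B$, where the attraction hypothesis genuinely applies, and $\Cal A\subset K$ and compactness come for free since each $\omega(B)\subset K$. The parts of your argument that survive are worth keeping: the cone lemma ($d(g,\partial\Cal C)>|h|$ implies $g-h\in\Cal C$) together with the observation $d(\cdot,\partial\Cal C)\ge d(\cdot,\partial\Sigma)$ is correct and in fact more carefully justified than the corresponding step in the paper; and your contradiction argument for the attraction property, as well as the computation showing that a limit $p$ of $S(h_n)x_n$ with $x_n\in B$ lies in $S(p_*)K$ for suitable $p_*$, only uses attraction of $B\in\Bbb B$ and transfers verbatim to the sets $\omega(B)$.
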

\begin{proof} When the  extensions of the main concepts of the attractors theory are fixed in a proper way, the proof of the theorem becomes very similar to the standard case of 1D time, see e.g. \cite{BV92}. Nevertheless, for the convenience of the reader, we present a detailed proof here. As usual, we first define the analogue of an $\omega$-limit set for a bounded set $B\subset\Bbb B$ as follows:
\begin{multline}
\omega(B):=\{u_0\in\Phi,\ \exists h_n\in\Sigma, \ u_n\in B,\ n\in\Bbb N, \\ \text{ such that }\ \lim_{n\to\infty}d(h_n,\partial\Sigma)=\infty,\ \ \lim_{n\to\infty}S(h_n)u_n=u_0\}.
\end{multline}
The equivalent  topological definition of  $\omega(B)$ reads
\begin{equation}\label{1.ot}
\omega(B)=\cap_{D\ge0}[\cup_{d(h,\partial\Sigma)\ge D}S(h)B]_{\Phi},
\end{equation}
where $[\cdot]_\Phi$ stands for the closure in $\Phi$. The proof of the equivalence is exactly the same as in 1D case and we left it to the reader.
\par
{\it Step 1.} $\omega(B)$ is not empty if $B$ is not empty and it is a subset of $K$. Indeed, let $h_n\in\Sigma$ be such that $d(h_n,\partial\Sigma)\to\infty$ and $u_n\in B$ be arbitrary (such sequences exist due to the assumption \eqref{2.sigma}). Due to the attraction property, up to passing to a subsequence, we may find a sequence $v_n\in K$ such that
$$
d(S(h_n)u_n,v_n)\le \frac1n.
$$
Utilising the compactness of $K$, we may assume without loss of generality that $v_n\to u_0\in K$. Finally, the triangle inequality gives the convergence $S(h_n)u_n\to u_0\in K$. Thus, $\omega(B)$ is not empty and, since the sequences $h_n$ and $u_n$ are chosen arbitrarily,  also see that $\omega(B)\subset K$. 
\par
{\it Step 2.} $\omega(B)$ attracts $B$. Indeed, assume that it is not so. Then, there exist a sequence $h_n\in\Sigma$ and a sequence $u_n\in B$ such that $d(h_n,\partial\Sigma)\to\infty$, but $d(S(h_n)u_n,\omega(B))\ge\eb_0>0$.  We have already proved that the sequence $S(h_n)u_n$ is precompact and therefore, up to passing to a subsequence, $S(h_n)u_n\to u_0\in\omega(B)$ (by the definition of $\omega(B)$). Thus, $d(u_0,\omega(B))=0$, but according to our assumption, $d(u_0,\omega(B))\ge\eb_0>0$ and this contradiction proves the statement. 
\par
{\it Step 3.} $S(h)\omega(B)=\omega(B)$ for all $h\in\Cal C$. Let us fix $h\in\Cal C$ and take arbitrary point $u_0\in\omega(B)$ together with the sequences $h_n\in\Sigma$ and $u_n\in B$ such that $d(h_n,\partial\Sigma)\to\infty$ and $S(h_n)u_n\to u_0$. Then, since $S(h)$ is continuous,
$$
S(h)u_0=S(h)\lim_{n\to\infty}S(h_n)u_n=\lim_{n\to\infty}S(h_n+h)u_n.
$$
Note that $d(h+h_n,\partial\Sigma)\to\infty$ as $n\to\infty$ and, due to the proved pre-compactness, we may assume without loss of generality that $S(h+h_n)u_n\to v_0\in\omega(B)$. This gives us the inclusion $S(h)\omega(B)\subset\omega(B)$.
\par
To prove the opposite inclusion, we note that, due to the assumptions that $d(h_n,\partial\Sigma)\to\infty$, the sequence $h_n-h$ will belong to  $\Sigma\subset\Cal C$ starting from a sufficiently big $n$ depending on $h$. Thus, without loss of generality, we may assume that $S(h_n-h)u_n\to w_0\in\omega(B)$ and, therefore, due to the continuity of $S(h)$,
$$
S(h)w_0=S(h)\lim_{n\to\infty}S(h_n-h)u_n=\lim_{n\to\infty}S(h_n)u_n=u_0.
$$
This finishes the proof of the strict invariance.
\par
{\it Step 4.}  Let us define
\begin{equation}\label{1.defatr}
\Cal A:=\bigg[\bigcup_{B\in\Bbb B}\omega(B)\bigg]_{\Phi}.
\end{equation}
We claim that $\Cal A$ is the desired attractor. Indeed, the attraction property is obvious since for any $B\in\Bbb B$, $\omega(B)\subset \Cal A$ and $\omega(B)$ attracts $B$. Compactness is also obvious since $\Cal A$ is a closed subset of the compact set $K$. Let us check the strict invariance. Indeed, let $u_0\in\Cal A$ and $h\in\Cal C$ be given. Then, there exist sequences $B_n\in\Bbb B$ and $u_n\in\omega(B_n)$  such that $u_0=\lim_{n\to\infty}u_n$. By the strict invariance of $\omega(B_n)$, we conclude that there are $v_n,w_n\in\omega(B_n)$ such that $S(h)v_n=u_n$ and $S(h)u_n=w_n$. Due to the compactness of $\omega(B_n)$, we may assume without loss of generality that $v_n\to v_0\in\Cal A$ and $w_n\to w_0\in\Cal A$. Finally, since $S(h)$ is continuous, we get $S(h)v_0=u_0$ and $S(h)u_0=w_0$. Thus, the strict invariance of $\Cal A$ is proved and the theorem is also proved.
\end{proof}
We now discuss the extra conditions which allow us to present the constructed attractor $\Cal A$ in terms of the set $\Cal K$ of complete bounded solutions.
\begin{corollary}\label{Cor1.main} Let the assumptions of Theorem \ref{Th1.main} hold and let in addition the attracting set $K$ be bounded ($K\in\Bbb B$). Then the following representation formula holds:
\begin{equation}\label{1.rep}
\Cal A=\Cal K\big|_{h=0},
\end{equation}
where $\Cal K$ is a set of all complete bounded trajectories of the semigroup $S(h)$.  
\end{corollary}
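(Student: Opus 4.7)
The plan is to prove the two inclusions $\Cal K\big|_{h=0} \subset \Cal A$ and $\Cal A \subset \Cal K\big|_{h=0}$ separately.

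For the first inclusion, take $u \in \Cal K$ with associated bounded set $B_u \in \Bbb B$. Pick any sequence $h_n \in \Sigma$ with $d(h_n,\partial\Sigma) \to \infty$ (such a sequence exists by \eqref{2.sigma}). Since $u$ is defined on all of $\R^m$, the points $u(-h_n)$ all lie in $B_u$, and the semigroup identity yields $S(h_n)\,u(-h_n) = u(0)$ for every $n$. Thus the constant sequence $S(h_n)\,u(-h_n) \equiv u(0)$ trivially converges to $u(0)$, and the definition of $\omega(B_u)$ used in the proof of Theorem~\ref{Th1.main} immediately gives $u(0) \in \omega(B_u) \subset \Cal A$.

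For the reverse inclusion, fix $u_0 \in \Cal A$; the task is to build a complete bounded trajectory passing through $u_0$ at $h=0$. Choose any vector $p$ in the interior of $\Cal C$; since $\Cal C$ is a closed cone with non-empty interior, for each $s \in \R^m$ one has $s + np = n(p + s/n) \in \Cal C$ for all $n$ sufficiently large. Using the strict invariance $S(p)\Cal A = \Cal A$, I would recursively select $u_{-n} \in \Cal A$ with $S(p)\,u_{-n} = u_{-n+1}$, so that $S(np)\,u_{-n} = u_0$ for every $n \in \N$. For each $s \in \R^m$ and every $n$ large enough that $s + np \in \Cal C$, set
\begin{equation*}
u(s) := S(s + np)\, u_{-n}.
\end{equation*}
To confirm that $u \in \Cal K$ with $u(0) = u_0$, three points need verification. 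First, the definition is independent of the choice of $n$: for admissible $n' \ge n$, the recursive construction gives $S((n'-n)p)\,u_{-n'} = u_{-n}$, whence $S(s+n'p)\,u_{-n'} = S(s+np)\,u_{-n}$. Second, the semigroup identity $S(h)\,u(s) = u(h+s)$ for $h \in \Cal C$, $s \in \R^m$ follows from $S(h)\,S(s+np) = S(h+s+np)$ once $n$ is chosen admissible for $s$. Third, strict invariance forces $u(s) \in \Cal A \subset K$ for every $s \in \R^m$, and the hypothesis $K \in \Bbb B$ lets us take $B_u = K$ as the bounding set. That $u(0) = u_0$ is immediate from $S(np)\,u_{-n} = u_0$.

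The main technical issue is consistency of $u(s)$ across different admissible values of $n$; it is precisely the recursive use of strict invariance along the interior direction $p$ that encodes a coherent ``past'' for the trajectory, and the non-empty interior of $\Cal C$ is what allows a single such direction $p$ to reach every $s \in \R^m$ after finitely many shifts. The new hypothesis $K \in \Bbb B$ enters only in the boundedness step, guaranteeing that the constructed trajectory lives in a set from the bornology $\Bbb B$.
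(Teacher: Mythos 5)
Your proof is correct and follows essentially the same route as the paper: the first inclusion via $u(0)=S(h_n)u(-h_n)\in\omega(B_u)\subset\Cal A$, and the second via a backward orbit built from strict invariance, extended to all of $\R^m$ by translated copies of $\Cal C$, with $K\in\Bbb B$ supplying the bounding set $B_u$. The only (harmless) difference is that you cover $\R^m$ by the translates $-np+\Cal C$ of a single interior direction $p$, whereas the paper uses the translates $-h_n+\Cal C$ for a sequence $h_n\in\Sigma$ with $d(h_n,\partial\Sigma)\to\infty$, invoking the hypothesis $\partial\Cal C\subset\partial\Sigma$ instead of the non-empty interior of $\Cal C$.
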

\begin{proof}This is the standard corollary of the strict invariance of the attractor. Indeed, the inclusion $\Cal K\big|_{h=0}\subset \Cal A$ is obvious, since $B_u$ is bounded and $u(-h_n)\in B_u$ for all $h_n$. To verify the opposite inclusion,  we take an arbitrary $u_0\in\Cal A$. Moreover, since $\Cal K$ is closed (in $C_{loc}(\R^n,\Phi)$), it is enough to consider the case when $u_0\in\omega(B)$ for some $B\in\Bbb B$ only, so $u_0=\lim_{n\to\infty}S(h_n)u_n$ for some $u_n\in B$ and $h_n\in\Sigma$ satisfying $d(h_n,\partial\Sigma)\to\infty$.   Then, the desired trajectory $u\in\Cal K$ is already defined in a cone $\Cal C$ via $u(s)=S(s)u_0$. Moreover, since the attractor is strictly invariant, we may find $u_1\in\Cal A$ such that $S(h_1)u_1=u_0$. This allows us to define the desired function $u(s):=S(s+h_n)u_1$ already for $s\in-h_n+\Cal C$ such that for $s\in\Cal C$ we have consistency with the previously defined function: $S(s+h_n)u_1=S(s)u_0$. Repeating this procedure and using that
$$
\cup_{n\in\Bbb N}(-h_n+\Cal C)=\R^m,
$$
which follows from $d(h_n,\partial\Sigma)\to\infty$ and the inclusion $\partial\Cal C\subset\partial\Sigma$,
we will construct the complete trajectory $u$ such that $u(0)=u_0$ and $u(h)\in\Cal A$ for all $h\in\R^m$. Since $\Cal A\subset K\in \Bbb B$ is bounded, we conclude that the trajectory $u\in\Cal K$ (we may take $B_u=K$).  This proves the representation formula and finishes the proof of the corollary.
\end{proof}
\begin{remark}\label{Rem1.cont-abs} Note that the assumption on the continuity of the operators $S(h)$ on the whole space may be too restrictive in applications. As follows from the proof of the theorem, it is enough, in a complete agreement with the one-parametrical case,  to require this continuity on the {\it absorbing} set of this semigroup only. A bit surprising that, it is not enough (again exactly as in the classical case) to require the continuity on the {\it attracting} set, see \cite{Z23} for the counterexample.
\par
We also note that the inclusion $\Cal K\big|_{h=0}\subset \Cal A$ holds without the assumption that $K\in\Bbb B$. In contrast to this, the opposite inclusion may fail (and often fails) without this extra assumption (the simplest example is the so-called point attractor where the bornology $\Bbb B$ consists of all one-point sets, see  \cite{Z23}.
\par
We  mention as well that, in the case where $K$ is bounded, the attractor possesses a simpler formula, namely, $\Cal A=\omega(K)$. This presentation  may fail in a general case.
\end{remark}
Let us now consider the case when $\Sigma=\Cal C$, fix any interior direction $\vec l\subset\Cal C$, $|\vec l|=1$ and introduce the directional semigroup $S_{\vec l}(t):=S(t\vec l)$. This is already a standard one-parametrical semigroup and we may speak about its $\Bbb B$-attractor $\Cal A_{\vec l}$. Under the assumptions of
Corollary \ref{Cor1.main}, this semigroup is continuous for every fixed $t$ and possesses the same compact attracting set $K$, therefore, there exists a $\Bbb B$-attractor $\Cal A_l$ of this semigroup. The next corollary tells that, under some natural extra assumption, this attractor coincides with the attractor of the whole semigroup $S(h)$ and, in particular, is independent of the choice of the direction $\vec l$.
\begin{corollary}\label{Cor1.dir} Let the assumptions of Corollary \ref{Cor1.main} be satisfied, $\Sigma=\Cal C$,  and let, in addition, the semigroup $S(h)$ be bounded on the compact attracting set $K\in\Bbb B$, i.e. the union $\cup_{h\in\Cal C}S(h)K$ is bounded. Then, for every interior direction $\vec l\in\Cal C$, the corresponding directional attractor $\Cal A_{\vec l}$ exists and is generated by all complete bounded trajectories of the initial semigroup $S(h)$, $h\in\Cal C$:
\begin{equation}\label{1.dir-rep}
\Cal A_{\vec l}=\Cal A=\Cal K\big|_{h=0}.
\end{equation}
\end{corollary}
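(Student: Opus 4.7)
The plan is to recognise $S_{\vec l}(t) := S(t\vec l)$ as the restriction of $S(h)$ to the ray $\R_+\vec l$, an honest one-parameter semigroup to which Theorem~\ref{Th1.main} and Corollary~\ref{Cor1.main} apply with $m=1$, $\Cal C = \Sigma = \R_+$. Once this is set up, proving $\Cal A_{\vec l} = \Cal A$ reduces to matching the one-dimensional representation $\Cal A_{\vec l} = \Cal K_{\vec l}\big|_{t=0}$ (with $\Cal K_{\vec l}$ the set of complete bounded trajectories of $S_{\vec l}$) against the $m$-dimensional representation $\Cal A = \Cal K\big|_{h=0}$ already furnished by Corollary~\ref{Cor1.main}.

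The first step is to transfer the attracting property of $K$ to $S_{\vec l}$. Since $\vec l$ lies in the interior of the cone, $\delta := d(\vec l, \partial \Cal C) > 0$, and by scale-invariance of the cone $d(t\vec l, \partial \Cal C) = t\delta$. Hence the $(\Bbb B, \Cal C)$-attraction of $K$ for $S(h)$ immediately yields ordinary $\Bbb B$-attraction of $K$ for $S_{\vec l}$, and Theorem~\ref{Th1.main} applied in dimension one produces the directional attractor $\Cal A_{\vec l} \subset K$ together with the representation $\Cal A_{\vec l} = \Cal K_{\vec l}\big|_{t=0}$ (using $K \in \Bbb B$).

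The inclusion $\Cal A \subset \Cal A_{\vec l}$ is then painless: given $u_0 = u(0)$ with $u \in \Cal K$, the restriction $\tilde u(t) := u(t\vec l)$ is a complete bounded trajectory of $S_{\vec l}$ (the $m$-dimensional semigroup identity restricts, and $\tilde u$ stays inside $B_u \in \Bbb B$), so $u_0 \in \Cal K_{\vec l}\big|_{t=0} = \Cal A_{\vec l}$. For the converse $\Cal A_{\vec l} \subset \Cal A$, I would take $v_0 = \tilde v(0)$ with $\tilde v \in \Cal K_{\vec l}$ and extend $\tilde v$ to a map $u : \R^m \to \Phi$ by $u(s) := S(s + t\vec l)\tilde v(-t)$ for any $t \geq 0$ large enough that $s + t\vec l \in \Cal C$. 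Interiority of $\vec l$ guarantees that $t > |s|/\delta$ suffices for every $s$, and both the well-definedness (independence from the choice of $t$) and the $m$-dimensional semigroup identity $S(h)u(s) = u(h+s)$ follow from the 1D relation $\tilde v(-t) = S_{\vec l}(t'-t)\tilde v(-t')$ for $t' > t$. The standard fact that complete bounded trajectories of the 1D semigroup lie inside its attractor forces $\tilde v(-t) \in \Cal A_{\vec l} \subset K$, so every value $u(s)$ lies in $\bigcup_{h \in \Cal C} S(h)K$.

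The main (and really only) technical point is precisely this extension step: the uniform-in-$h$ hypothesis $\bigcup_{h \in \Cal C} S(h)K \in \Bbb B$ is exactly what is needed to conclude that the extended $u$ is bounded in the bornological sense on all of $\R^m$, not merely along the ray $\R\vec l$, so that $u \in \Cal K$ and $v_0 = u(0) \in \Cal A$. Without this uniform boundedness the constructed $u$ could escape $\Bbb B$ away from the ray, and the inclusion $\Cal A_{\vec l} \subset \Cal A$ could genuinely fail; otherwise the argument is a clean unpacking of the representation formulas in both dimensions.
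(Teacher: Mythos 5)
Your proof is correct and follows essentially the same route as the paper: existence of $\Cal A_{\vec l}$ from the one-dimensional theory, the easy inclusion $\Cal A\subset\Cal A_{\vec l}$ by restricting complete trajectories to the ray, and the converse by extending a complete bounded trajectory of $S_{\vec l}$ to all of $\R^m$ via $u(s)=S(s+t\vec l)\tilde v(-t)$, with the hypothesis that $\bigcup_{h\in\Cal C}S(h)K$ is bounded invoked exactly where the paper invokes it, namely to control $S(h)u_0$ off the ray. The only cosmetic difference is that you phrase the backward extension through the representation $\Cal A_{\vec l}=\Cal K_{\vec l}\big|_{t=0}$ rather than through the strict invariance of $\Cal A_{\vec l}$, which amounts to the same choice of preimages.
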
    
\begin{proof} Indeed, we only need to check the representation formula \eqref{1.dir-rep}. This can be done exactly as in the proof of Corollary \eqref{Cor1.main}. The only difference is when we take $u_0\in\Cal A_{\vec l}\subset K$, we know only that $S_{\vec l}(t)u_0$ is bounded, but we need the boundedness of $S(h)u_0$ for all $h\in\Cal C$. Since $\Cal A_{\vec l}$ is a priori not invariant with respect to the whole semigroup $S(h)$ (although a posteriori it is), we do not know how to prove this boundedness and have to put an extra assumption on the boundedness of $S(h)$. Then this boundedness will follow from the boundedness of $\Cal A_{\vec l}$ and the rest of the proof is identical. Thus, the corollary is proved.
\end{proof}
\begin{remark} As we see from the corollary, we can restore the whole attractor $\Cal A$ using a single interior direction $\vec l$ and classical one-parametric semigroups only. However, considering the attractor for the multi-parametric semigroup $S(h)$ gives an essential extra information, namely, the uniformity of attraction with respect to different directions $\vec l$, which is crucial for our applications to problems in the exterior domains, see next section, and this is one of our main sources of motivation to consider such attractors.
\end{remark}

\section{Applications}\label{s2}
In this section, we consider several applications of the developed theory to various PDEs in unbounded domains. We start with the simplest model examples of a parabolic equation in the whole space $\Omega=\R^d$.
\begin{example}\label{Ex1.par} Let us consider the following Cauchy problem:
\begin{equation}\label{2.par}
\Dt u=\Dx u+u-u^3,\ u\big|_{t=0}=u_0
\end{equation}
in the whole space $\Omega=\R^d$. It is well-known, see e.g. \cite{LU68, Z-VIN}, that this equation is well posed in the phase space $\Phi_b:=C_b(\R^d)$ and the following dissipative estimate holds:
\begin{equation}\label{2.dis}
\|u(t)\|_{C_b(\R^d)}\le  Q(\|u_0\|_{C_b(\R^d)})\chi(1-t)+C_*,
\end{equation}
where $\chi(z)$ is a standard Heaviside function and the positive constant $C_*$ and a monotone increasing function $Q$ are independent of $u_0$ and $t$. Note that estimate \eqref{2.dis} is a bit stronger than the standard dissipative estimate (the standard term $e^{-\alpha t}$ is replaced by $\chi(1-t)$). This is related with the super-linear growth rate of the nonlinearity and shows that any set of the initial conditions will be inside of the absorbing ball 
\begin{equation}\label{2.aball}
\Cal B:=\{u_0\in\Phi_b,\ \|u_0\|_{\Phi_b}\le C_*\}   
\end{equation}
for all $t\ge1$. The simplest way to see this is to consider an ODE
$$
y'=y-y^3, \ y(t)\in\Bbb R,
$$
for which the analogous property is obvious and can be obtained, e.g. from the explicit formula for the solution. Then, applying the comparison principle, we see that the same is true for the PDE \eqref{2.par} as well. A bit more accurate analysis shows that this property is related with the superlinear growth rate of the nonlinearity only and is not related with maximum/comparison principle. For instance, it can be obtained for a system of reaction-diffusion equations
$$
\Dt u=a\Dx u-f(u), \ \ u=(u_1,\cdots,u_n),
$$
where the diffusion matrix $a$ satisfies the condition $a+a^*>0$ and the nonlinearity $f(u)$ enjoying the superlinearity assumption:
\begin{equation}\label{2.sup-lin}
f(u).u\ge-C+|u|^{2+\eb}
\end{equation}
with $\eb>0$ and some growth restrictions, see \cite{Z-VIN} for the details.
\par
Important for our analysis is the fact that, due to the parabolic smoothing property, estimate \eqref{2.dis} implies the estimate
\begin{equation}\label{2.sdis}
\|u(t)\|_{C^1(\R^d)}\le C_{**},\ \ t\ge2,
\end{equation}
for some positive constant $C_{**}$ which is independent of $t$ and $u_0\in \Phi_b$. 
\par
To proceed further, we recall that the uniform topology of $\Phi_b$ is usually too strong to consider the attractors in unbounded domains and should be replaced by the local topology of $\Phi_{loc}:=C_{loc}(\R^d)$, see \cite{MZ08} and references therein. By this reason, we endow the space $\Phi_b$ with the topology induced by the embedding $\Phi_b\subset\Phi_{loc}$ and denote the obtained space by $\Phi$. Clearly, the space $\Phi$ is a metric space. Of course, it is not complete, but this is not important since, after the restriction of this topology to absorbing balls, it becomes complete. Actually, we fix the bornology $\Bbb B$, which consists of bounded sets in the metric space $\Phi_b$ and use the topology of $\Phi_{loc}$ for the attraction property. Moreover, we take $\Sigma=\Cal C$ here.
\par 
We recall that, since the Cauchy problem \eqref{2.par} is globally well-posed,  the solution semigroup $\Cal S(t):\Phi\to\Phi$, $\Cal S(t)u_0:=u(t)$, $t\ge0$, is well-defined. In addition, since our equation does not depend explicitly on $x$, the group of spatial shifts $T(s)u(x):=u(x+s)$,\ $s,x\in\R^d$, acts on the phase space $\Phi$ and commutes with the evolution semigroup $\Cal S(t)$:
$$
T(s)\circ \Cal S(t)=\Cal S(t)\circ T(s).
$$
 Therefore, the extended $(d+1)$-parametrical semigroup $S(h)$, $h=(t,s)\in\Cal C:=\R_+\times\R^d$, defined via
  \begin{equation}\label{2.ext}
  S(h):=\Cal S(t)\circ T(s), \ \ h\in\Cal C,
  \end{equation}
  acts on the phase space $\Phi$. Since we define the bornology on $\Phi$ using the usual bounded sets in the Banach space $\Phi_b$,  namely, $B\in\Bbb B$ if and only if $B$ is bounded in $\Phi_b$, it follows from the dissipative estimate \eqref{2.dis} that the ball $\Cal B$, defined by \eqref{2.aball}, is indeed a bounded absorbing set for this semigroup. However, we still unable to apply Theorem \ref{Th1.main} since this absorbing set is not compact in the topology of $\Phi_{loc}$. To get the compactness, we need to use the smoothing estimate \eqref{2.sdis}. Namely, let
  \begin{equation}\label{2.comp}
  K:=\bigg[\bigg\{ u_0\in C^1_b(\R^d),\ \|u_0\|_{C^1(\R_b)}\le C_{**}\bigg\}\bigg]_{\Phi_{loc}}.
  \end{equation}
Then, due to \eqref{2.sdis}, the set $K$ is also an absorbing set. On the other hand, due to the Arzela theorem, it is a compact set in $\Phi$. Note that, in contrast to the reflexive case, the closure in \eqref{2.comp} is necessary to get a compact set.   
\par
Thus, the existence of a compact absorbing set is verified. Let us discuss the continuity. Indeed, the space shifts $T(s)$ are obviously continuous in $\Phi$, but it seems that the evolution operators $\Cal S(t)$ are not continuous on the whole space $\Phi$. However, if we restrict them to the absorbing ball $\Cal B$, then they will be continuous, see e.g. \cite{Z04}. Moreover, the assumptions of Corollary \ref{Cor1.main} are also obviously satisfied.

Thus, due to Corollary \ref{Cor1.main} and Remark \ref{Rem1.cont-abs}, there exists a $\Bbb B$-attractor $\Cal A$ of the extended semigroup $S(h)$, $h\in\Cal C$ which is generated by the set $\Cal K$ of all complete bounded trajectories of this semigroup. Finally, as it is not difficult to see that $\Cal K$ consists of all bounded solutions of equation \eqref{2.par} defined for all $(t,x)\in\R^{d+1}$, Indeed, the one-to-one correspondence between them is constructed as follows: if $u(h)\in\Cal K$ then $u(t,0)$ is a complete bounded solution of \eqref{2.par}. Vice versa, if $u(t,x)$ is a complete solution, then $T(s)u(t,x)$ is a complete trajectory. Thus, in a complete analogy with one-dimensional time, the attractor $\Cal A$ of the extended semigroup $S(h)$ associated with equation \eqref{2.par} is determined by the set of all complete bounded (in $\Phi_b$) solutions of equation \eqref{2.par}.  
\par
Furthermore, for every direction $\vec l=(\kappa,\vec l_0)\in\Cal C$ with $\kappa>0$, we may define a directional semigroup $S_{\vec l}(\tau):=S(\tau\vec l)$, $\tau>0$. Then Corollary \ref{Cor1.dir} gives us that there exists an attractor of this semigroup and that $\Cal A_{\vec l}=\Cal A$. We see that the directional attractors $\Cal A_{\vec l}$ coincide with the constructed before attractor $\Cal A$ for all admissible $\vec l$. Nevertheless, the dynamics properties of $S_{\vec l}(\tau)$ strongly depend on $\vec l$. For instance, for purely temporal direction $\vec l=(1,0)$, $S_{\vec l}(\tau)=\Cal S(\tau)$ and we know that \eqref{2.par} is an extended gradient system, see \cite{Z04}. In particular, at least for small dimensions $d=1,2$, we do not have any periodic orbits and the dynamics is relatively simple. On the other hand, it is also known, see \cite{Z04}, that if $\vec l=(\kappa,\vec l_0)$ and $\kappa>0$ is large enough, there are plenty periodic orbits for $S_{\vec l}(\tau)$, moreover, it has infinite topological entropy and the dynamics is extremely chaotic, see \cite{Z04} for the details.
\end{example}
We now turn to elliptic equations, see \cite{Ba94,CMS93,MieZ02,VZ96, Z-VIN,Z98} for applications of the attractor theory for such equations.
\begin{example}\label{Ex2.ell-cone} Let $\Omega\subset\R^d$ be an unbounded domain with a sufficiently smooth boundary and let us consider the following Dirichlet boundary value problem in $\Omega$:
\begin{equation}\label{2.ell}
\Dx u-f(u)=0,\ \ u\big|_{\partial\Omega}=u_0.
\end{equation}
We assume that the nonlinearity $f\in C(\R,\R)$ satisfies assumption \eqref{2.sup-lin}. Moreover, we start with the standard case where the semi-invariant direction exists and assume that there exists a cone $\Cal C\in\R^d$ with a non-empty interior such that
\begin{equation}\label{2.trans}
\Cal T(s)\Omega\subset\Omega,\ \ s\in\Cal C.
\end{equation}  
where $\Cal T(s)(x):=x+s$ is the group of spatial shifts acting in $\R^d$. Finally, we assume that $u_0\in C_b(\partial\Omega)$.
\par
It is well-known that, under the above assumptions, problem \eqref{2.ell} possesses at least one solution $u\in \Phi_b:=C_b(\Omega)$ for every $u_0\in C_b(\partial\Omega)$ and any such  solution satisfies the following analogue of the dissipative estimate~\eqref{2.dis}:
\begin{multline}\label{2.ell-dis}
\|u\|_{C_b(B_1(x_0)\cap\Omega)}\le Q(\|u_0\|_{C_b(\partial\Omega)})\chi(2-d(x_0,\partial\Omega))+C_*,\  x_0\in\Omega,
\end{multline}
where $B_r(x_0)$ stands for the ball of radius $r$ in $\R^d$ centered at $x_0\in\R^d$, $\chi(x)$ is a Heaviside function and a positive constant $C_*$ and a monotone increasing function $Q$ are independent of $x_0$ and $u_0$. The proof of these facts can be found in \cite{Z-VIN}. We also mention that, combining estimate \eqref{2.ell-dis} with the standard interior regularity estimates, we get the following analogue of  \eqref{2.sdis}:
\begin{equation}\label{2.ell-int}
\|u\|_{C^1_b(B_1(x_0))}\le C_{**},\ \ x_0\in\Omega,\ \ d(x_0,\partial\Omega)\ge 3,
\end{equation}
for some new monotone function $Q$ and positive constant $C_{**}$. 
\par
We are now ready to construct a dynamical system (with multi-di\-men\-sio\-nal time) associated with problem \eqref{2.ell}. Note that the solution of this problem may be not unique, so it is natural, following  \cite{ChVi95,VZ96} to use the so-called trajectory dynamical systems and related trajectory attractors. Namely, let us denote by $\Cal K_+\subset \Phi_b$ the set of all solutions $u\in\Phi_b$ of problem \eqref{2.ell}which correspond to all initial data $u_0\in C_b(\partial\Omega)$.  Then, due to conditions \eqref{2.trans}, the translation semigroup
\begin{equation}\label{2.elsem}
(S(h)u)(x):=u(\Cal T(h)x)=u(x+h),\ \ h\in\Cal C,\ u\in \Cal K_+`
\end{equation}
acts on the space $\Cal K_+$, i.e. $S(h)\Cal K_+\subset \Cal K_+$, $h\in\Cal K_+$. We refer to this semigroup as a trajectory dynamical system $(S(h), \Cal K_+)$ associated with the elliptic equation \eqref{2.ell}. Analogously to the previous example, we endow the trajectory phase space $\Cal K_+$ with the topology induced by the embedding $\Cal K_+\subset C_{loc}(\Omega)$ and with the bornology induced by the embedding $\Cal K_+\subset\Phi_b$ and fix the time arrow $\Sigma=\Cal C$. Then, due to estimate \eqref{2.ell-dis}, the set
\begin{equation}
\Cal B:=\bigg\{u\in \Cal K_+,\ \sup_{x_0\in\Omega}\|u\|_{C(B_1(x_0)\cap \Omega)}\le C_*,\ x_0\in\Omega\bigg\}
\end{equation}
is a bounded absorbing set in $\Cal K_+$ for the semigroup $S(h)$. As in the previous example, this set is not compact in $\Cal K_+$, so we need to use the interior estimate \eqref{2.ell-int} and construct a compact absorbing ball via
\begin{equation}
K:=\bigg[\bigg\{u\in \Cal K_+,\ \|u\|_{C^1(B_1(x_0)\cap\Omega)}\le C_{**},\ x_0\in\Omega \bigg\}\bigg]_{\Phi_{loc}}.
\end{equation}
Indeed, the compactness of this set in $\Phi_{loc}$ follows from the Arzella theorem (as in the previous example), but, in contrast to the previous example, we need to check, in addition, that $K\subset\Cal K_+$. In other words, we need to check that if a sequence of solutions $u_n\in\Cal B$ of equation \eqref{2.ell} converges to $u\in\Phi_{loc}$ in the topology of the space $\Phi_{loc}$, then the limit function $u$ is also a solution of this equation. But the last fact follows immediately by passing to the limit in the definition of a weak solution. Thus, the set $K$ is indeed a compact absorbing set for the trajectory dynamical system $(S(h),\Cal K_+)$. The continuity of this semigroup is obvious since it is nothing more than the semigroup of spatial translations, so all of the assumptions of Theorem \ref{Th1.main} and Corollary \ref{Cor1.main} are satisfied and, therefore, the considered trajectory semigroup possesses a global attractor $\Cal A_{tr}\subset K$ which is now referred as a trajectory attractor of equation \eqref{2.ell} and this attractor is determined by the set $\Cal K$ of all bounded (in $\Phi_b(\R^d)$) solutions of equation \eqref{2.ell}:
\begin{equation}\label{2.ell-rep}
\Cal A_{tr}=\Cal K\big|_{x\in\Omega}.
\end{equation}
This result can be reformulated in the following form, which does not use explicitly the theory of the attractors. 
\begin{corollary}\label{Cor2.main} Let the domain $\Omega$ and the nonlinearity $f$ satisfy assumptions \eqref{2.trans} and \eqref{2.sup-lin} respectively and let $\Cal K$ be the set of all bounded (in $\Phi_b$) solutions of the elliptic problem \eqref{2.ell} defined for all $x\in\R^d$. Then, there exists a monotone increasing function $\alpha:\R_+\to\R_+$ such that $\lim_{z\to0}\alpha(z)=0$ and, for every solution $u\in \Phi_b$ defined in $\Omega$, the following estimate holds:
\begin{equation}\label{2.no-attr}
d_{C(B^1_x)}\(u\big|_{B^1_x},\Cal K\big|_{B^1_x}\)\le \alpha\(d_{\R^d}(x,\partial\Omega)\),
\end{equation}
for all $x\in\Omega$ such that $d_{\R^d}(x,\partial\Omega)>2$. Here 
 $d_V(p,U)$ stands for the distance between a point $p$ and a set $U$ in a metric space $V$ and the function $\alpha$ is independent of $u$ and $x\in\Omega$.
\end{corollary}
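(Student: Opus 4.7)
The plan is to read the estimate off directly from the attraction property of the trajectory attractor $\Cal A_{tr}$ constructed in Example \ref{Ex2.ell-cone}, interpreting $\alpha$ as the quantitative inverse of the attraction modulus, re-expressed in terms of the intrinsic distance $d(x,\partial\Omega)$.

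Concretely, I would fix a reference point $y_0\in\Omega$ with $d(y_0,\partial\Omega)\ge 3$ and set $V:=\overline{B^1_{y_0}}\subset\Omega$. Using the dissipative estimate \eqref{2.ell-dis} together with the semi-invariance \eqref{2.trans}, one checks that for any $u\in\Cal K_+$ and any $h\in\Cal C$ with $d(h,\partial\Cal C)$ large enough the shifted solution $S(h)u=u(\cdot+h)$ lies in the bounded absorbing ball $\Cal B\in\Bbb B$ of that example, independently of the boundary data of $u$; this is the mechanism by which the shifted solution forgets its boundary data. Applying the attraction property of $\Cal A_{tr}=\Cal K|_\Omega$ to $\Cal B$ on the compact set $V$ then yields, for each $\eb>0$, a threshold $D(\eb)$ such that whenever $h\in\Cal C$ satisfies $d(h,\partial\Cal C)\ge D(\eb)$ there is $\tilde v\in\Cal K$ with $\sup_{y\in V}|u(y+h)-\tilde v(y)|<\eb$. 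Setting $\hat v(\cdot):=\tilde v(\cdot-h)$, which again lies in $\Cal K$ by translation invariance of the equation on $\R^d$, and changing variables $x=y+h$, this translates into
\[
d_{C(B^1_{y_0+h})}\bigl(u|_{B^1_{y_0+h}},\,\Cal K|_{B^1_{y_0+h}}\bigr)<\eb.
\]

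For a given $x\in\Omega$ with $d(x,\partial\Omega)$ large I would set $h:=x-y_0$ and use the semi-invariance \eqref{2.trans} to verify that $h\in\Cal C$ and $d(h,\partial\Cal C)\ge c\,d(x,\partial\Omega)-c'$ for constants $c,c'>0$ depending only on $y_0$ and the opening of $\Cal C$. The required function $\alpha$ is then defined by $\alpha(r):=\inf\{\eb>0\st c\,r-c'\ge D(\eb)\}$, a function that tends to $0$ as $r\to\infty$; this gives the claimed inequality for every $x$ with $d(x,\partial\Omega)>2$, uniformly in $u$.

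The principal obstacle is this last geometric matching step: a single reference point $y_0$ only controls points $x\in y_0+\Cal C$, whereas in a general semi-invariant $\Omega$ deep interior points may lie outside that shifted cone. The remedy I would employ is to work with a \emph{bounded} family $\{y_0^i\}_{i\in I}\subset\Omega$ with $y_0^i+\Cal C\subset\Omega$ and $\bigcup_i(y_0^i+\Cal C)\supset\{x\in\Omega\st d(x,\partial\Omega)>2\}$; boundedness of this family keeps the thresholds $D(\eb,V^i)$ uniformly bounded in $i$, so that $\alpha$ can still be chosen independently of $x$ and $u$.
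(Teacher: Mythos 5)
Your first two steps faithfully reconstruct what the paper leaves implicit: the dissipative estimate \eqref{2.ell-dis} gives absorption into $\Cal B$ after a shift that is uniform over all boundary data, the attraction property of $\Cal A_{tr}$ applied to $\Cal B$ on the fixed compact set $V=\overline{B^1_{y_0}}$ gives an $\eb$-close element of $\Cal A_{tr}=\Cal K|_\Omega$, and translating the representative back by $-h$ stays inside $\Cal K$ because the equation is autonomous. This is exactly the paper's (one-sentence) argument. The problem is the geometric matching step, which you correctly identify as the crux but then resolve incorrectly. Neither the inequality $d(h,\partial\Cal C)\ge c\,d(x,\partial\Omega)-c'$ nor the bounded-family remedy is available under the hypotheses of the corollary. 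Take $\Omega=\{x\in\R^2\st x_1>0\}$ and $\Cal C=\{h\st h_1\ge|h_2|\}$; then \eqref{2.trans} holds, but for $y_0=(3,0)$ and $x=(R,R-10)$ one has $d(x,\partial\Omega)=R\to\infty$ while $h=x-y_0$ satisfies $d(h,\partial\Cal C)=7/\sqrt2$ for all $R$, and points such as $x=(R,2R)$ are not in $y_0+\Cal C$ at all. Moreover, for any \emph{bounded} family $\{y_0^i\}\subset B_M(0)$ the union $\bigcup_i(y_0^i+\Cal C)$ is contained in $\{x\st x_1\ge|x_2|-2M\}$ and so cannot cover $\{x\in\Omega\st d(x,\partial\Omega)>2\}$. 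An unbounded family would restore the covering, but then the thresholds $D(\eb,V^i)$ supplied by the abstract attraction property carry no uniformity in $i$, so the function $\alpha$ cannot be extracted this way. In short: "$d(h,\partial\Cal C)$ large" and "$d(x,\partial\Omega)$ large" are genuinely inequivalent for a general pair $(\Omega,\Cal C)$ satisfying \eqref{2.trans}, and your scheme only proves \eqref{2.no-attr} on the subcone $y_0+\{h\in\Cal C\st d(h,\partial\Cal C)\ge D\}$.

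There are two clean ways to close this. The first is to abandon the cone altogether and note that Corollary \ref{Cor2.good} already contains Corollary \ref{Cor2.main}: condition \eqref{2.trans} with a nonempty-interior cone forces $\Omega$ to contain arbitrarily large balls, so \eqref{2.sigma} holds with $\Sigma=\Omega$, and in that framework the attraction condition $d(h,\partial\Sigma)\ge D$ \emph{is} the condition $d(x,\partial\Omega)\ge D$, so no matching is needed. The second is a direct compactness argument replicating Steps 1--2 of Theorem \ref{Th1.main} by hand: if \eqref{2.no-attr} failed, there would be solutions $u_n$ and points $x_n$ with $R_n:=d(x_n,\partial\Omega)\to\infty$ and $d_{C(B^1_{x_n})}(u_n,\Cal K|_{B^1_{x_n}})\ge\eb_0$; the shifted functions $v_n:=u_n(\cdot+x_n)$ solve \eqref{2.ell} on $B_{R_n}(0)$, are bounded by $C_*$ on $B_{R_n-2}(0)$ by \eqref{2.ell-dis}, hence precompact in $C_{loc}(\R^d)$ by \eqref{2.ell-int} and Arzel\`a--Ascoli, and any limit lies in $\Cal K$, a contradiction. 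Either route gives $\alpha$ independent of $u$ and $x$; your route, as written, does not.
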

Indeed, this statement is a straightforward corollary of our choice of the attraction property to the attractor $\Cal A$, representation of the attractor through complete bounded solutions and estimate \eqref{2.ell-dis} which guarantees that all trajectories of our dynamical system $S(h)$ reach the absorbing set $\Cal B$ in a uniform finite time.  

\begin{remark}\label{Rem2.dir} As in the parabolic case, we may fix any internal direction $\vec l\in\Cal C$ and consider the directional semigroup $S_{\vec l}(\tau):=S(\tau\vec l):\Cal K_+\to\Cal K_+$. According to the general theory, its $\Bbb B$-attractor will coincide with the $\Bbb B$-attractor $\Cal A$ of the semigroup $S(h)$ constructed before. Such attractors have been considered previously in \cite{VZ96,Z-VIN}. However, the previous results for directional semigroups {\it do not} imply Corollary \ref{Cor2.main} since, although the attractor by itself is independent of the choice of the direction $\vec l$, the rate of the attraction to it a priori may depend on $\vec l$. The result concerning the attractors for semigroups with multi-dimensional time presented in this paper show that a posteriori the rate of attraction is uniform with respect to $\vec l$ and give us the nice attraction property \eqref{2.no-attr}.
\end{remark}
\end{example}
\begin{example}\label{Ex2.ext} Our next task is to extend the result of Corollary \ref{Cor2.main} to more general class of unbounded domains $\Omega$. Namely, we want to relax property \eqref{2.trans}. Indeed, on the one hand, it is not involved into the statement of Corollary \ref{Cor2.main} and it is difficult to find any reasons why this invariance property should be essential for the result, so we expect that this corollary remains valid for more general classes of domains, e.g. for exterior domains. On the other hand, if assumption \eqref{2.trans} is violated, there is no more dynamics on $\Cal K_+$ since the operators $(S(h)u)(x):=u(x+h)$ do not map any more the phase space $\Cal K_+$ into itself. By this reason, it seems difficult/impossible to get Corollary \ref{Cor2.main} for general unbounded domains using the dynamical approach and the attractors theory. Nevertheless, there is a non-trivial trick which makes this possible and which we will describe below. Indeed, let $\Omega\subset\R^d$ be an arbitrary domain with a sufficiently smooth boundary and let as before $\Cal K_+\subset\Phi_b:=C_b(\Omega)$
be the set of all bounded solutions of problem \eqref{2.ell} (which correspond to all possible values of the boundary data $u_0\in C_b(\partial\Omega)$). Then, every $u\in\Phi_b$ can be extended to some function $\tilde u\in \Phi_b(\R^d):=C_b(\R^d)$ (crucial that we do not assume that $\tilde u$ satisfies \eqref{2.ell} outside of $\Omega$). We denote by $\tilde{\Cal K}_+\subset\Phi_b(\R^d)$ the set of all functions such that $\tilde u\big|_{\Omega}$ solves \eqref{2.ell}.
\par
As the next step, we fix the trajectory phase space $\Phi:=\Phi_b(\R^d)$, endow this space by the topology of $\Phi_{loc}:=C_{loc}(\R^d)$  and also fix the translation group $(S(h)u)(x)=u(x+h)$ acting on it as the trajectory dynamical system. Obviously, $S(h)\Phi=\Phi$, $h\in\Cal C:=\R^d$ and the maps $S(h):\Phi\to\Phi$ are continuous for every fixed $h\in\R^d$.
\par
Up to the moment the construction looks useless since it is not related at all with the initial equation \eqref{2.ell}. However, equation\eqref{2.ell} comes into play through the proper choice of the bornology $\Bbb B$ on $\Phi$. Namely, we say that a subset $B\subset\Phi$ belongs to $\Bbb B$ if
\par
1. The set $B$ is bounded in $C_b(\R^d)$;
\par
2. $B\subset\tilde{\Cal K}_+$. In other words, the restriction of $B$ on $\Omega$ consists of solutions of the elliptic boundary value problem \eqref{2.ell}.
\par
We also fix the time arrow as $\Sigma:=\Omega$. Then, the following result holds.

\begin{proposition}\label{Prop2.main} Let the domain $\Omega\subset \R^d$ be such that condition \eqref{2.sigma} be satisfied with $\Sigma=\Omega$ and $\Cal C=\R^d$. Let also the phase space $\Phi$, the group $S(h):\Phi\to\Phi$, $h\in\R^d$, and the bornology $\Bbb B$ be defined as above. Then, this group possesses a $(\Bbb B,\Sigma)$-attractor, which is generated by the set $\Cal K$ of complete bounded (in $C_b(\R^d)$) solutions of \eqref{2.ell} defined on the whole space $x\in\R^d$, i.e. $\Cal A=\Cal K$.
\end{proposition}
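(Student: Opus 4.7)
The plan is to apply Theorem \ref{Th1.main} and Corollary \ref{Cor1.main} to the translation group $S(h):\Phi\to\Phi$, $h\in\Cal C=\R^d$, equipped with the bornology $\Bbb B$ and time arrow $\Sigma=\Omega$. Continuity of $S(h)$ is immediate (translations are homeomorphisms of $\Phi_{loc}$), and $\partial\Cal C=\emptyset\subset\partial\Sigma$ trivially. Hence the task reduces to exhibiting a compact $(\Bbb B,\Sigma)$-attracting set $K\in\Bbb B$; the representation formula will then follow from Corollary \ref{Cor1.main} once complete bounded trajectories of the translation semigroup are identified with bounded global solutions of \eqref{2.ell} on $\R^d$.

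The decisive observation comes from the interior regularity estimate \eqref{2.ell-int}: if $u\in B\in\Bbb B$ and $h\in\Omega$ with $d(h,\partial\Omega)\ge D$, then $B_{D-3}(h)\subset\Omega$, $u$ solves \eqref{2.ell} on this ball, and therefore $\|S(h)u\|_{C^1(B_{D-3}(0))}\le C_{**}$. Accordingly, I take
\begin{equation*}
K:=\Cal K\cap\bigl\{v\in C_b(\R^d)\st\|v\|_{C^1_b(\R^d)}\le C_{**}\bigr\},
\end{equation*}
where $\Cal K$ is as in the statement: bounded classical solutions of \eqref{2.ell} defined on all of $\R^d$. Compactness of $K$ in $\Phi_{loc}$ follows from Arzel\`a--Ascoli, and $K$ is closed in $\Phi_{loc}$ because uniformly $C^1_b$-bounded $C_{loc}$-limits of solutions are again solutions of \eqref{2.ell} (pass to the limit in the weak formulation and upgrade via standard elliptic regularity). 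Clearly $K$ is bounded in $C_b(\R^d)$ and its elements solve \eqref{2.ell} on $\Omega$, so $K\in\Bbb B$.

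For the $(\Bbb B,\Sigma)$-attracting property I argue by contradiction: suppose there exist $R_0,\eb_0>0$, a set $B\in\Bbb B$, sequences $h_n\in\Omega$ with $D_n:=d(h_n,\partial\Omega)\to\infty$ and $u_n\in B$ such that $d_{C(B_{R_0}(0))}(S(h_n)u_n,K)\ge\eb_0$. The estimate above gives $\|S(h_n)u_n\|_{C^1(B_{D_n-3}(0))}\le C_{**}$, so a diagonal Arzel\`a--Ascoli extraction yields a subsequence converging in $\Phi_{loc}$ to some $v\in C^1_b(\R^d)$ with $\|v\|_{C^1_b}\le C_{**}$. On every fixed ball $B_R(0)$ the tail $S(h_n)u_n$ satisfies \eqref{2.ell} (since eventually $B_R(h_n)\subset\Omega$), so the limit $v$ solves \eqref{2.ell} on $B_R(0)$ for every $R$, hence on all of $\R^d$; thus $v\in K$, a contradiction. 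Theorem \ref{Th1.main} now produces a $(\Bbb B,\Sigma)$-attractor $\Cal A\subset K$, and Corollary \ref{Cor1.main} identifies it with the set of complete bounded trajectories of $S(h)$ evaluated at $h=0$. Since $S(h)$ is pure translation, every such trajectory is of the form $u(h)=S(h)v$ for a single $v\in C_b(\R^d)$; the requirement that $u(h)\in B_u\in\Bbb B$ for all $h\in\R^d$ is equivalent to every translate $v(\,\cdot\,+h)|_\Omega$ solving \eqref{2.ell}, i.e., $v$ solving \eqref{2.ell} on $\bigcup_{h\in\R^d}(\Omega-h)=\R^d$. This identifies $\Cal A$ with $\Cal K$ as asserted.

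The main conceptual obstacle is that, in absence of any shift-semi-invariance, $S(h)u$ itself lies \emph{outside} $\Bbb B$ for generic $h$, so one has no naive invariant absorbing set. This is precisely what the formulation of attraction in the $C_{loc}$ topology, combined with the interior estimate, circumvents: as $d(h,\partial\Omega)\to\infty$ the translate $S(h)u$ solves \eqref{2.ell} on arbitrarily large balls around the origin, and the compactness argument then extracts a genuine global solution as a $C_{loc}$-limit, placing the limit back into $\Bbb B$ (and in fact into the much smaller set $\Cal K$).
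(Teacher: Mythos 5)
Your proof is correct, but it departs from the paper's argument in one genuinely structural way: the choice of the compact attracting set $K$. The paper takes $K$ to be the closure in $\Phi_{loc}$ of the full $C^1_b$-ball of radius $C_{**}$ (formula \eqref{2.comp1}); this set is trivially compact and the attraction property is proved by an explicit construction (cut off $S(h)u$ on $B_D(0)$ and extend to an element of the ball), but since such a $K$ contains functions that do not solve \eqref{2.ell} on $\Omega$, it is \emph{not} in $\Bbb B$, so Corollary \ref{Cor1.main} is unavailable and the paper must separately verify by hand that every $\omega$-limit point is a global solution, i.e.\ $\Cal A\subset\Cal K$. You instead intersect with the set of global solutions, so that $K\in\Bbb B$ and the representation formula $\Cal A=\Cal K\big|_{h=0}$ drops out of Corollary \ref{Cor1.main}; the price is that compactness and closedness of your $K$ are no longer free from Arzel\`a--Ascoli alone — you need that $C_{loc}$-limits of solutions are solutions and that the $C^1_b$ bound survives the limit, which requires the interior elliptic estimates to upgrade $C_{loc}$ convergence to $C^1_{loc}$ convergence (you flag this, and it is standard, but it is the one place where your route carries a real extra burden). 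Your attraction proof by contradiction-plus-diagonal-extraction replaces the paper's explicit approximant $\tilde u$; both rest on the same mechanism, namely the interior estimate \eqref{2.ell-int} together with the observation that $S(h)u$ solves \eqref{2.ell} on $B_{d(h,\partial\Omega)}(0)$. Your closing identification of complete bounded trajectories of the translation group with global bounded solutions (via $\bigcup_{h}(\Omega-h)=\R^d$) matches the intended reading of the statement and is correct. Two minor imprecisions worth fixing: the negation of the attraction property in $\Phi_{loc}$ gives $d_{\Phi_{loc}}(S(h_n)u_n,K)\ge\eb_0$ rather than a lower bound in a single seminorm $C(B_{R_0}(0))$ (though for a compact $K$ in the metrized $C_{loc}$ topology the two are easily reconciled), and one should note that $\Cal K\neq\emptyset$ (e.g.\ constant solutions at zeros of $f$, which exist by \eqref{2.sup-lin} and continuity) so that $K$ is a nonempty attracting set.
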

\begin{proof} In order to apply our main Theorem \ref{Th1.main} to the group $S(h)$, $h\in\R^d$, we only need to construct a compact attracting set $K$ for it. We claim that 
\begin{equation}\label{2.comp1}
K:=\bigg[\bigg\{u\in \Phi,\ \|u\|_{C^1(B_1(x_0))}\le C_{**},\ x_0\in\R^d \bigg\}\bigg]_{\Phi_{loc}}.
\end{equation}
is the desired attracting set. As usual, its compactness follows from the Arzela theorem and we only need to verify the attraction property. Let $B\in\Bbb B$ be arbitrary and let $u\in\Bbb B$. Assume that $D>0$ is big enough and $h\in\R^n$ be such that $B_{D+3}(0)\subset\Omega$. Then, by the definition of $\Bbb B$, $T(h)u\big|_{B_{D+3}(0)}$ solves equation \eqref{2.ell} and, by the interior estimate \eqref{2.ell-int}
$$
\|u\|_{C^1(B_D(0))}\le C_{**}.
$$  
Let us take any $\tilde u\in\Phi$ such that $\tilde u\big|_{B_D(0)}=u\big|_{B_D(0)}$ and $\|\tilde u\|_{C_b(\R^d)}\le C_{**}$. Then,
$$
d_{\Phi_{loc}}(u,\tilde u)\le \alpha(D),
$$
where $\alpha:\R_+\to\R_+$ is a monotone increasing function satisfying the condition $\lim_{z\to0}\alpha(z)=0$, which is independent of $u$ and $B$, and $d_{\Phi_{loc}}(u,v)$ is some fixed metric in $C_{loc}(\R^d)$. Since $\tilde u\in K$, this proves the attraction property.
\par
Thus, due to Theorem \ref{Th1.main}, the considered semigroup $S(h):\Phi\to\Phi$ possesses a $(\Bbb B,\Sigma)$-attractor $\Cal A\subset K$. Moreover, from the general theory, we also have an inclusion $\Cal K\subset \Cal A$. However, in our case $K\notin\Bbb B$, so we cannot have the opposite inclusion from the general theory and need to verify it for our concrete case. Namely, we only need to verify that every $u\in \Cal A$ is a solution of \eqref{2.ell}. Let $u\in\Cal A$. Then, by Definition \ref{1.defatr} and the fact that the $C_{loc}(\R^d)$ limit of solutions is again a solution of \eqref{2.ell}, we may assume without loss of generality that $u\in\omega(B)$ for some $B\in\Bbb B$. This means that there exist a sequence $h_n\in\Omega$ such that $d(h_n,\Omega)\to\infty$ and $u_n\in B$ such that $u=\lim_{n\to\infty}S(h_n)u_n$. In particular, since $u_n\in B\in\Bbb B$, for every $D>0$, the functions $(S(h_n)u_n)\big|_{B_D(0)}$ solve equation \eqref{2.ell} if $h_n$ are large enough. Since the limit of solutions is a solution, we conclude that $u\big|_{B_D(0)}$ solves \eqref{2.ell} and since $D$ is arbitrary, we have $u\in\Cal K$. This finishes the proof of the proposition.
\end{proof}
Thus, we get the analogue of Corollary \ref{Cor2.main} for general unbounded domains.
\begin{corollary} \label{Cor2.good} Let $\Omega\subset\R^d$ be an unbounded domain with a sufficiently smooth boundary, \eqref{2.sigma} hold with $\Sigma=\Omega$ and let the nonlinearity $f$ satisfy \eqref{2.sup-lin}. Then, estimate \eqref{2.no-attr} holds for every solution $u\in C_b(\Omega)$.
\end{corollary}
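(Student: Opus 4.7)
The approach is to combine the characterization $\Cal A=\Cal K$ from Proposition \ref{Prop2.main} with a compactness--contradiction argument that turns the abstract attraction property into a rate that is uniform across all bounded solutions of \eqref{2.ell}. The uniformity will be bought by the universal interior estimate \eqref{2.ell-int}, which, for any solution $u$, provides $C^1$ control far from $\partial\Omega$ independent of the boundary data of $u$.

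First, I would translate and verify uniform $C^1$ bounds. Given $u\in C_b(\Omega)$ solving \eqref{2.ell} and $x_0\in\Omega$ with $d(x_0,\partial\Omega)\ge D+3$, extend $u$ to an arbitrary $\tilde u\in C_b(\R^d)$; then $\{\tilde u\}\in\Bbb B$ and on $B_D(0)$ one has $(S(x_0)\tilde u)(y)=u(y+x_0)$ with
$$
\|S(x_0)\tilde u\|_{C^1(B_D(0))}\le C_{**}
$$
by \eqref{2.ell-int}, where $C_{**}$ is universal, independent of $u$ and of its boundary trace.

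Next, I would argue by contradiction. Suppose \eqref{2.no-attr} fails: there exist $\varepsilon_0>0$, solutions $u_n\in C_b(\Omega)$ and points $x_n\in\Omega$ with $d(x_n,\partial\Omega)\to\infty$ such that
$$
d_{C(B_1(0))}\bigl(v_n|_{B_1(0)},\Cal K|_{B_1(0)}\bigr)\ge\varepsilon_0,\qquad v_n(y):=u_n(y+x_n).
$$
By the previous step, $v_n$ is uniformly bounded in $C^1(B_R(0))$ for every fixed $R>0$ once $n$ is large enough, so Arzel\`a--Ascoli together with a diagonal extraction yields a subsequence converging in $C_{loc}(\R^d)$ to some $v_\infty\in C^1_b(\R^d)$. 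Since $v_n$ solves $\Delta v_n-f(v_n)=0$ on $B_R(0)$ for all large $n$, passing to the limit (using continuity of $f$ and strong $C_{loc}$-convergence) shows that $v_\infty$ solves the equation on the whole of $\R^d$ and is bounded, hence $v_\infty\in\Cal K$. This contradicts the lower bound on $d_{C(B_1(0))}(v_n,\Cal K)$, so for every $\varepsilon>0$ there exists $D(\varepsilon)$ such that $d_{C(B_1(x))}(u|_{B_1(x)},\Cal K|_{B_1(x)})<\varepsilon$ for every bounded solution $u$ whenever $d(x,\partial\Omega)\ge D(\varepsilon)$. A monotone envelope of this relation yields the desired rate $\alpha$.

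The main obstacle is precisely the uniformity of the rate across solutions: the attraction property furnished by Proposition \ref{Prop2.main} a priori supplies rates depending on the bounded set $B\in\Bbb B$, and for a singleton $B=\{\tilde u\}$ this rate could in principle depend on $\|\tilde u\|_{C_b(\R^d)}$, and hence on the boundary data of $u$. The interior estimate \eqref{2.ell-int} resolves this by collapsing all translates $S(x_0)\tilde u$ (with $d(x_0,\partial\Omega)\gg 1$) into a single compact subset of $C_{loc}(\R^d)$ that is universal across the family of solutions; this collapse is what enables the contradiction argument and produces a solution-independent function $\alpha$.
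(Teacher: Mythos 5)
Your argument is correct, and it is a more hands-on route than the one the paper takes. The paper disposes of this corollary in one line by invoking Proposition \ref{Prop2.main}: the attractor $\Cal A=\Cal K$ attracts every $B\in\Bbb B$, and the uniform interior estimates force every solution into a fixed compact set after a uniform ``time'' $D$, which upgrades the $B$-dependent attraction rate to a solution-independent rate $\alpha$. You instead bypass the abstract machinery almost entirely and prove the uniform attraction directly by a compactness--contradiction argument: translates $v_n=u_n(\cdot+x_n)$ with $d(x_n,\partial\Omega)\to\infty$ are uniformly bounded in $C^1$ on every fixed ball by \eqref{2.ell-dis} and \eqref{2.ell-int} (independently of the boundary data of $u_n$), Arzel\`a--Ascoli plus diagonal extraction gives a $C_{loc}$-limit, and closedness of the solution set under local uniform limits places that limit in $\Cal K$, contradicting the assumed uniform lower bound on the distance; a monotone envelope then produces $\alpha$. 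The two proofs rest on exactly the same two ingredients (the universal interior estimate and the stability of the equation under $C_{loc}$-limits), but your version has the merit of making explicit the one point the paper leaves implicit, namely why the rate can be chosen independently of the solution even though the abstract attraction property only yields a rate depending on the bounded set $B\in\Bbb B$ --- an issue that is slightly delicate here because the compact attracting set $K$ of Proposition \ref{Prop2.main} does not itself belong to $\Bbb B$. What you lose is only economy: the paper's phrasing reuses the already-proven attraction property of $\Cal A$, whereas you effectively reprove a quantitative form of it; what you gain is a self-contained and fully uniform statement. One cosmetic remark: the paper's normalisation of $\alpha$ (monotone increasing with $\alpha(z)\to0$ as $z\to0$) suggests the intended argument of $\alpha$ is a decaying function of $d(x,\partial\Omega)$; your formulation ``for every $\varepsilon>0$ there is $D(\varepsilon)$'' captures the intended content and is consistent with it.
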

Indeed,  the statement follows from Proposition \ref{Prop2.main}. 
\end{example}
\begin{example}\label{Ex2.disc} Note that in general it seems difficult to find explicitly the set $\Cal K$ of all complete bounded solutions $u\in\Phi_b(\R^d)$ of equation \eqref{2.ell}, but it is possible in some cases. In particular, due to the super-linearity of the function $f$, it is not difficult to prove that any solution $u\in C_{loc}(\R^d)$ is automatically bounded (i.e. $u\in C_b(\R^d)$, see \cite{Z97,Z-VIN} for details). In this examples we take the nonlinearity $f$ of the form:
\begin{equation}\label{2.good}
f(u)=u(u-1)^2\cdots(u-N)^2
\end{equation}
for some $N\in\Bbb N$. We claim that at least in low dimensional cases $d=1$ or $d=2$, the set $\Cal K$ has a very simple structure:
\begin{equation}\label{2.Kgood}
\Cal K=\{0,1,\cdots,N\}.
\end{equation}
In particular, the set $\Cal K$ is totally disconnected and does not possess any non-trivial solutions. Indeed, multiplying equation \eqref{2.ell} by $u$ (without integration) and denoting $U(x):=u^2(x)$, we arrive at
\begin{equation}\label{2.conv}
\Dx U=2|\Nx u|^2+f(u).u\ge0.
\end{equation}
Let $d=1$. Then the function $U(x)$ is convex and bounded, so it must be a constant. After that we see from \eqref{2.conv} that $|\Nx u|^2=0$, so $u$ is a constant. Finally, since $f(u).u=0$, we get the desired description of $\Cal K$.
\par
Let now $d=2$. Then, from equation \eqref{2.conv}, we conclude that $-U(x)$ is a bounded subharmonic function in $\R^2$ and, by the Liouville theorem, it must be a constant. The rest of the proof is the same as in 1D case.
\par
Obviously, for $d>2$, a bounded subharmonic function is not necessarily a constant, so we do not know whether or not \eqref{2.Kgood} holds if $d>2$, so we assume from now on that $d=2$ (the case $d=1$ is not very interesting since the corresponding equation can be solved explicitly). Combining \eqref{2.Kgood} with \eqref{Cor2.good}, we get the following result.

\begin{corollary} Let the smooth  domain $\Omega\in\R^2$ be such that the set
$$
\Omega_D:=\{x\in\Omega,\ \ d(x,\partial\Omega)>D\}
$$
is connected for a sufficiently large $D$. Then, for every $u\in C_b(\Omega)$ which solves the equation
$$
\Dx u=u(u-1)^2\cdots(u-N)^2,\ \ x\in\Omega,
$$
the exists a number $N_u\in\{0,1,\cdots,N\}$ such that 
$$
u(x)\to N_u,\ \ \text{as }\ d(x,\partial\Omega)\to\infty.
$$
\end{corollary}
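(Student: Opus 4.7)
The plan is to combine Corollary \ref{Cor2.good} with the description \eqref{2.Kgood} of $\Cal K$ and then use the connectedness assumption to glue local choices into a single integer $N_u$.

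First, I would apply Corollary \ref{Cor2.good} to the given solution $u\in C_b(\Omega)$. Since $\Cal K=\{0,1,\dots,N\}$ consists of constants, the estimate \eqref{2.no-attr} reads
\begin{equation*}
\min_{j\in\{0,\dots,N\}}\ \sup_{y\in B_1(x)}|u(y)-j|\le \alpha\bigl(d(x,\partial\Omega)\bigr),\qquad d(x,\partial\Omega)>2,
\end{equation*}
with $\alpha(z)\to 0$ as $z\to\infty$. Choose $D_0$ large enough so that $\Omega_{D_0}$ is connected and $\alpha(D_0)<1/4$. For every $x\in\Omega_{D_0}$ there is then a \emph{unique} integer $j(x)\in\{0,\dots,N\}$ realizing the minimum, since the elements of $\Cal K$ are integer-spaced.

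Next I would show that the map $x\mapsto j(x)$ is locally constant on $\Omega_{D_0}$. If $x_1,x_2\in\Omega_{D_0}$ satisfy $|x_1-x_2|<1$, then $B_1(x_1)\cap B_1(x_2)\ne\emptyset$, and for any $y$ in the intersection
\begin{equation*}
|j(x_1)-j(x_2)|\le |u(y)-j(x_1)|+|u(y)-j(x_2)|\le 2\alpha(D_0)<\tfrac12,
\end{equation*}
forcing $j(x_1)=j(x_2)$. Since $\Omega_{D_0}$ is open, connected, and path-connected (as an open subset of $\R^2$ with smooth complement), and $j$ is locally constant, $j$ is constant on $\Omega_{D_0}$; call this common value $N_u$.

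Finally, for any $y\in\Omega_{D_0}$ pick $x=y$ in the pointwise bound above to get $|u(y)-N_u|\le \alpha(d(y,\partial\Omega))$, and the right-hand side tends to $0$ as $d(y,\partial\Omega)\to\infty$. This yields the claimed convergence $u(x)\to N_u$. The only delicate point in the argument is keeping the choice of the nearest root $j(x)$ unambiguous and consistent across overlapping balls; this is exactly what the threshold $\alpha(D_0)<1/4$ together with the integer spacing of $\Cal K$ and the connectedness of $\Omega_{D_0}$ deliver, so there is no substantive obstacle beyond this bookkeeping.
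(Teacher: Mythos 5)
Your argument is correct and is essentially the paper's own proof, with the details of the ``locally constant nearest root'' step written out explicitly: the paper likewise invokes Corollary \ref{Cor2.good} together with \eqref{2.Kgood}, chooses $D$ so that $u$ is within $\tfrac12$ of $\Cal K$ on $\Omega_D$, and concludes from the connectedness of $\Omega_D$ and the total disconnectedness of $\Cal K$ that $u$ converges to a single element. Your only (harmless) deviation is reading the normalization of $\alpha$ as $\alpha(z)\to0$ for $z\to\infty$, which is clearly the intended meaning of \eqref{2.no-attr}.
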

\begin{proof} Indeed, the convergence to the set $\Cal K=\{0,1,\cdots,N\}$ follows from Corollary \ref{Cor2.good}. Let $D$ be such that the distance from $u(x)$ to $K$ is less than $\frac12$ if $d(x,\partial\Omega)>D$. Then, since $\Omega_D$ is connected and the set $\Cal K$ is totally disconnected, we must converge to a single point of $\Cal K$. 
\end{proof}
\begin{remark} If the set $\Omega_D$ is disconnected for all large $D$, we may have convergence to different elements of $\Cal K$ in different connected components. Note also that the equilibrium $u=0$ is hyperbolic and other equilibria $u=1,\cdots, N$ are not hyperbolic. At least in the case where $d=1$ or in cylindrical domains with one unbounded direction, there is a result that the presence of two or more hyperbolic equilibria guarantees that the set $\Cal K$ is not trivial, see \cite{FSV98}. We do not know whether or not this result remains true in multi-dimensional case.
\end{remark}
\end{example}

\end{document}